\newtheorem{lm}{Lemma}
\newtheorem{te}{Theorem}
\newtheorem{re}{Remark}
\theoremstyle{definition}
\newcommand{\dif}{\mathrm{d}}
\newcommand{\norm}[1]{\Vert #1 \Vert}
\newcommand{\abs}[1]{\vert #1 \vert}
\newcommand{\ov}{\overline}
\newcommand{\RR}{\mathbb{R}}
\newcommand{\NN}{\mathbb{N}}
\DeclareMathOperator{\di}{div}
\begin{document}
\title[Liouville for primitive equations]{A Liouville-type theorem for the 3D primitive~equations}

\author{D. Peralta-Salas}
\address{Instituto de Ciencias Matem\'aticas, Consejo Superior de
 Investigaciones Cient\'\i ficas, 28049 Madrid, Spain}
\email{dperalta@icmat.es}

\author{R. Slobodeanu}
\address{Faculty of Physics, University of Bucharest, P.O. Box Mg-11, RO--077125 Bucharest-M\u agurele, Romania}
\email{radualexandru.slobodeanu@g.unibuc.ro}

\date{\today}

\subjclass[2020]{35Q35, 86A10.}

\keywords{Primitive equations, stationary solutions, compact support.}

\maketitle


\begin{abstract}
The 3D primitive equations are used in most geophysical fluid models to approximate the large scale oceanic and atmospheric dynamics. We prove that there do not exist smooth stationary solutions to the 3D primitive equations with compact support, independently of the presence of the Coriolis rotation term or the viscosity. This result is in strong contrast with the recently established existence of compactly supported smooth solutions to the incompressible 3D Euler equations.
\end{abstract}

\maketitle

\section{Introduction}\label{S:intro}
The classical Liouville's theorem in Complex Analysis states that every bounded entire function must be constant. Analogously any bounded harmonic function defined on $\RR^n$ must be constant. By extension, a Liouville-type theorem for a PDE is any result that asserts the triviality of (smooth enough) stationary solutions defined on the whole space, which have constant behaviour at infinity (or other boundedness properties). For fluid and fluid related equations this type of rigidity has been studied extensively and Liouville-type results are known for Navier-Stokes equations (with axisymmetry)~\cite{koch, chae2}, forced Navier-Stokes and Euler equations~\cite{chae1}, MHD and Hall-MHD systems~\cite{chae3, LiNiu},  (generalized) Beltrami equations~\cite{N,CC}, as well as for the Euler equations with axisymmetry and no swirl~\cite{JX}, to mention only a few from a plethora of articles on this topic. In most cases, a Liouville-type result bans the existence of compactly supported solutions of the respective equations. From this perspective (but also for other reasons ~\cite{CV}) the recent construction by Gavrilov~\cite{Gavrilov} of a $C^\infty$ compactly supported solution to the incompressible Euler equations in $\mathbb R^3$ is a remarkable fact (see also~\cite{CV},~\cite{DEP} and~\cite{rove}).

Motivated by this surprising result, in this paper we focus on a different system of equations, from Geophysical Fluid Dynamics this time, namely the primitive equations. The 3D primitive equations are used to model the large scale oceanic and atmospheric dynamics and they are at the core of the numerical general circulation models (GCMs) used for weather and climate prediction~\cite{tem}. They can be seen as the small aspect ratio limit of the Navier-Stokes equations~\cite{LiTi}, with rotation term and coupled to thermodynamics. Introduced a century ago by Bjerkness (later simplified by Richardson), the primitive equations have been thoroughly studied in the last decades since their comeback~\cite{LTWa, LTW} in the context of an increased computational power. Many remarkable existence, uniqueness and regularity results in both viscous and inviscid cases are now available in the literature (see~\cite{tem,titi,titi2} and references therein).

The stationary (i.e., time independent) primitive equations are the following system of PDEs that the velocity field $X=ue_1+ve_2+we_3$, the hydrodynamic pressure  $p$ and the temperature $T$ of the fluid have to satisfy:
\begin{equation}\label{primitiveq}
\begin{aligned}
&X\cdot \nabla u + \partial_x p - Rv = \nu_H \Delta_H u + \nu_3 \partial_{zz}u \,,  \\
&X\cdot \nabla v + \partial_y p + Ru = \nu_H \Delta_H v + \nu_3 \partial_{zz}v\,, \\
&\partial_z p+T =0\,, \\
&X\cdot \nabla T =\kappa_{H} \Delta_{H} T+\kappa_{3} \partial_{zz}T\,, \\
&\di X =0\,,
\end{aligned}
\end{equation}
on the horizontal channel
$$\Omega=\left\{(x, y, z)\in\mathbb R^3: 0 < z < L \right\}\,,$$
with boundary conditions: no normal flow, that is
\[
w|_{z=0}=w|_{z=L}=0\,,
\]
and no heat flux in the vertical direction,
\[
\partial_z T|_{z=0}=\partial_z T|_{z=L}=0\,.
\]
Here $\kappa_H>0$ and $\kappa_3>0$ are the diffusion constants, $\nu_H \geqslant 0$ and $\nu_3 \geqslant 0$ are the viscosity coefficients, and $\Delta_H$ stands for the horizontal Laplacian $\Delta_H:=\partial_{xx}+\partial_{yy}$. The rotation parameter $R\in\mathbb R$ controls the Coriolis forcing term. The physical significance of the equations is given by the conservation of horizontal momentum (first two equations), hydrostatic balance, heat conduction and continuity equation, respectively.

The goal of this note is to study the existence of $C^\infty$ solutions to Equations~\eqref{primitiveq} such that the velocity field $X$ has compact support contained in the channel~$\Omega$. Our main result is the following Liouville-type theorem establishing that such compactly supported stationary solutions do not exist (unless trivial). This mathematical fact has a clear physical relevance: one cannot have localized (in a compact region) steady motions of the ocean or of the atmosphere that are elsewhere at rest, while such ``pathological'' objects do exist for the 3D incompressible Euler equations~\cite{Gavrilov}.

Recall that the support of a vector field is the closure of the set of points where the field does not vanish (it is then a closed set by definition).

\begin{te}\label{th.main}
Let $(X,p,T)$ be a $C^\infty$ solution to the stationary primitive equations~\eqref{primitiveq} with the aforementioned boundary conditions. Assume that the velocity field $X$ has compact support contained in $\Omega$. Then $X\equiv 0$, $p=-az+b$ and $T=a$ for some constants $a,b\in \mathbb R$.
\end{te}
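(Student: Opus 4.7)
Set $K:=\operatorname{supp}(X)\subset\Omega$; by hypothesis $K$ is compact and stays at positive distance from $\partial\Omega$. The plan is to argue in three stages: first show $T$ must equal a constant $a$, then show the horizontal pressure is constant so $p=-az+b$, and finally conclude $X\equiv 0$.

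\textbf{Stages 1 and 2 ($T\equiv a$, $p=-az+b$).} On the open set $\Omega\setminus K$ the field $X$ and all its derivatives vanish identically, so the horizontal momentum equations collapse to $\nabla_H p=0$; together with the hydrostatic balance $T=-\partial_z p$ this forces $T=T(z)$ outside $K$, and then the temperature equation reduces to $T''(z)=0$ there. The Neumann conditions $\partial_z T|_{z=0,L}=0$, applied at boundary points (all of which lie outside $K$), pin down $T\equiv a$ on $\Omega\setminus K$. Hence $\tilde T:=T-a$ is compactly supported in $\Omega$; multiplying the temperature equation by $\tilde T$ and integrating, the advective term vanishes by $\di X=0$ together with $w|_{z=0,L}=0$, while the diffusive term (integrated by parts using $\partial_z\tilde T|_{z=0,L}=0$) yields
\begin{equation*}
\kappa_H\int_\Omega|\nabla_H\tilde T|^2\,\dif V+\kappa_3\int_\Omega|\partial_z\tilde T|^2\,\dif V=0,
\end{equation*}
so $T\equiv a$. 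Hydrostatic balance then gives $p=-az+Q(x,y)$. For any $(x_0,y_0)\in\RR^2$, compactness of $K$ in $\Omega$ lets me choose $z_0\in(0,L)$ with $(x_0,y_0,z_0)\notin K$; at such a point every term of the first momentum equation that involves $X$ or any derivative of $X$ vanishes, leaving $\partial_x Q(x_0,y_0)=0$, and likewise $\partial_y Q(x_0,y_0)=0$. Thus $Q\equiv b$.

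\textbf{Stage 3 (viscous case).} With $Q$ constant the momentum system reads
\begin{equation*}
X\cdot\nabla u-Rv=\nu_H\Delta_H u+\nu_3\partial_{zz}u,\qquad X\cdot\nabla v+Ru=\nu_H\Delta_H v+\nu_3\partial_{zz}v.
\end{equation*}
Multiplying by $u$ and $v$ and summing (the Coriolis terms cancel), then integrating over $\Omega$ using $\di X=0$, the compact support of $X$, and $w|_{z=0,L}=0$, gives
\begin{equation*}
\nu_H\int_\Omega|\nabla_H u_H|^2\,\dif V+\nu_3\int_\Omega|\partial_z u_H|^2\,\dif V=0,
\end{equation*}
where $u_H:=(u,v)$. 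If $\nu_H>0$ then $u_H$ is forced to be independent of $(x,y)$, and if $\nu_3>0$ then $u_H$ is forced to be independent of $z$; in either case, compact support of $X$ in the infinite slab $\Omega$ immediately yields $u_H\equiv 0$, after which $w=-\int_0^z\di_H u_H\,\dif z'=0$ and we conclude $X\equiv 0$.

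\textbf{The hard part: the inviscid case $\nu_H=\nu_3=0$.} Here the energy identity above is vacuous, and one must argue along streamlines of $X$. With $Q$ constant the horizontal momentum reads $X\cdot\nabla u_H=-RJu_H$, where $J$ denotes the $90^\circ$ rotation of $\RR^2$; along any integral curve $\gamma$ of $X$ one has $\dot u_H=-RJu_H$, so $u_H$ is constant along streamlines when $R=0$ and rotates with conserved modulus when $R\neq 0$. When $R=0$, any point $p$ with $u_H(p)=(u_0,v_0)\neq 0$ would produce a streamline on which $\dot x\equiv u_0$ and $\dot y\equiv v_0$, forcing the streamline to escape every bounded set and contradicting $\operatorname{supp}(X)\subset\Omega$; hence $u_H\equiv 0$. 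When $R\neq 0$, the functions $x_c:=x+v/R$ and $y_c:=y-u/R$ are first integrals of $X$ (a direct check: $X\cdot\nabla x_c=u+(X\cdot\nabla v)/R=u-u=0$) and they reduce outside $K$ to the coordinates $(x,y)$; my plan is to use these two invariants to argue that every streamline of $X$ must, through the level sets of $(x_c,y_c)$, connect to the exterior region where $u_H$ vanishes, so that conservation of $|u_H|$ along streamlines forces $u_H\equiv 0$. This rotating--inviscid step is where I would expect the main technical difficulty.
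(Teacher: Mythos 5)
Your Stages 1--2 and the viscous case reproduce the paper's Steps 1 and 4. Your inviscid $R=0$ argument is a correct \emph{alternative} to the paper's: the paper integrates the identities $xX\cdot\nabla u=\di(xuX)-u^2$ and $yX\cdot\nabla v=\di(yvX)-v^2$ to conclude $\int_\Omega u^2=\int_\Omega v^2=0$, whereas you observe that $u_H$ is a first integral and that a trajectory with $u_H\equiv(u_0,v_0)\neq 0$ escapes every compact set while remaining in $\operatorname{supp}X$; both work. You also identify precisely the first integrals on which the paper's hard case rests: your $(x_c,y_c)$ are $(I_2/R,-I_1/R)$ with $I_1=u-Ry$, $I_2=v+Rx$.

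However, the case $R\neq 0$, $\nu_H=\nu_3=0$ --- the entire substance of the theorem, the other cases being elementary --- is not proved: you state a plan and explicitly defer the ``main technical difficulty''. Moreover, the plan as stated does not close, for two reasons. First, $|u_H|$ is conserved along \emph{streamlines}, not along the level sets of $(x_c,y_c)$, and a streamline lying inside $K$ has no reason to reach the exterior. Nor does connectedness of a fibre to the exterior by itself kill $u_H$ on that fibre: on $\{x_c=c_1,\ y_c=c_2\}$ one has $u=R(y-c_2)$ and $v=R(c_1-x)$, so the exterior portion of the fibre (where $x=c_1$, $y=c_2$) carries no information about $u_H$ at interior points of the same fibre. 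The paper instead writes $X=F\,\nabla I_1\times\nabla I_2$ and uses $\di X=0$ to show that $F$ is \emph{basic}, i.e.\ genuinely constant on the fibres of $I=(I_1,I_2)$; it is $F$, not $|u_H|$, whose vanishing propagates from the exterior along fibres and forces $X=0$. Second, one must actually prove that fibres near some boundary point of $K$ are connected arcs that cross $\partial K$; this is not automatic, since a fibre could a priori be trapped in $\operatorname{int}(K)$. The paper achieves this by choosing $q\in\partial K$ at minimal distance from $\partial\Omega$ (so no vertical segment lies in $\partial K$ near $q$), showing via Taylor expansion that $I$ is a $C^k$-perturbation of order $\epsilon^{N_0}$ of $I_0=(-Ry,Rx)$ on $B(q,\epsilon)$ after rescaling, and invoking Thom's isotopy theorem to conclude that the fibres of $I$ are small deformations of vertical lines and hence cross $\partial K$. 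Without these two ingredients the rotating inviscid case remains open, so the proposal does not yet constitute a proof of the theorem.
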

\begin{re}
The same result holds true for compactly supported smooth solutions of the primitive equations defined on the whole $\RR^3$, provided that they satisfy the heat flux boundary condition
\[
\lim_{z\to\pm \infty} \partial_zT=0\,.
\]
\end{re}

\begin{re}
In general, the time dependent inviscid 3D primitive equations are ill-posed in all Sobolev spaces. This follows from the recently established fact that solutions in a neighborhood of certain steady states exhibit Kelvin-Helmholtz instability~\cite{Titi21}. This suggests that a natural space for well-posedness of the time dependent inviscid primitive equations is the space of real analytic vector fields, which is indeed the case, at least under suitable conditions~\cite{Titi2-22}. Of course, in the setting of analytic functions, no compactly supported solutions can exist.
\end{re}

The proof of Theorem~\ref{th.main} is divided in four steps. In the first one we get rid of the pressure function and reduce the problem to a set of partial differential equations that only involve the velocity field $X$. The case of positive viscosity $\nu_H+\nu_3>0$ is elementary and directly follows from integration by parts (this may also be understood as a consequence of partial analytic hypoelliptic regularity, see~\cite{Titi22} for this property in the time dependent case); it is presented in Step~4. The proof for the case $\nu_H=\nu_3=0$ and no Coriolis term, i.e., $R=0$, can be settled using elementary identities for divergence-free vector fields (cf. Step~2). The most involved case is the non viscous one with Coriolis rotation term $R\neq 0$, which is Step~3. The proof of this part is based on the existence of two nontrivial first integrals of the vector field $X$. The argument we develop is rather unusual in the analysis of PDEs, and is based on a local flow box argument combined with the classical Thom's isotopy theorem from differential topology~\cite{abra}.

We emphasize that the assumption that the support of $X$ is contained in the open set $\Omega$ is important. Indeed, there are smooth solutions to the inviscid (i.e., $\nu_H=\nu_3=0$) stationary primitive equations such that $X$ has compact support contained in $\overline{\Omega}$. When $R\neq 0$, one can take solutions of the form
\begin{equation*}
X=R\psi' \, (ye_1-xe_2)\,,\qquad p=-az+b+\frac12R^2\int_0^{x^2+y^2} \!\! \psi'(s)^2 ds-\frac12R^2\psi\,,\qquad T=a\,,
\end{equation*}
while for $R=0$ the solution is given by
\begin{equation*}
X=\psi' \, (ye_1-xe_2)\,,\qquad p=-az+b+\frac12\int_0^{x^2+y^2} \!\! \psi'(s)^2 ds\,,\qquad T=a\,.
\end{equation*}
Here $a,b\in\mathbb R$ and $\psi\equiv \psi(x^2+y^2)$ is any $C^\infty$ compactly supported function.

The proof of Theorem~\ref{th.main} is presented in Section~\ref{S:proof}. In the last section (Section~\ref{S:final}) we discuss the particular case of axisymmetric stationary solutions in comparison with Gavrilov's compactly supported steady states of the 3D Euler equations.

\section{Proof of the main theorem}\label{S:proof}

We divide the proof in four steps. In the first step we use elementary arguments to show that a compactly supported velocity field $X$ implies that the pressure and the temperature must be of the form $p=-az+b$ and $T=a$, respectively, for some constants $a,b\in\mathbb R$ on the whole $\Omega$. This allows us to reduce the problem to a PDE that involves only the vector field $X$. The second step proves the theorem in the case when the Coriolis term is $R=0$ and there is no viscosity ($\nu_H=\nu_3=0$), the third step addresses the case with rotation ($R \neq 0$) and no viscosity ($\nu_H=\nu_3=0$), while the viscous case is considered in the last step (just integration by parts). The proof of Step~2 is based on certain simple identities for divergence-free vector fields, while the proof of Step~3 is more involved and exploits the existence of two special first integrals for the vector field $X$. All along this section we shall denote the volume element $dxdydz$ by $d\mu$.

\subsection{Step~1: reduction of the equations}

Let $K\subset \Omega$ be the compact set where $X$ is supported. On the set $\Omega\backslash K$ the first and second primitive equations imply that $p\equiv p(z)$, which in turn implies that $T\equiv T(z)$ by the third primitive equation. Accordingly, we infer from the fourth primitive equation and the zero heat flux boundary condition that
\[
T=a
\]
for some real constant $a$, and hence
$$p=-az+b$$
for some constant $b$, on the complement of $K$.

Next we observe that multiplying the fourth primitive equation by $T$ and integrating on $\Omega$, we obtain the identity
\begin{equation}\label{eq.T}
\frac12\int_{\Omega}X\cdot \nabla T^2 \dif \mu =\kappa_H \int_\Omega T\Delta_H T \dif \mu + \kappa_3\int_\Omega T\partial_{zz}T \dif \mu \,,
\end{equation}
which is well defined because $T=a$ in the complement of $K$. Now we notice that the condition $\di X=0$ implies that
\[
\int_{\Omega}X\cdot \nabla T^2 \dif \mu =\int_{\partial\Omega}T^2X\cdot Nd\sigma=0\,,
\]
where we have used that $X$ is supported on $K\subset\Omega$ (here $N$ is the outer unit normal vector on $\partial\Omega$). Moreover, using that $T$ is constant in $\Omega\backslash K$, we also infer that
\[
\int_\Omega T\Delta_H T \dif \mu =-\int_{\Omega}\Big((\partial_xT)^2+(\partial_yT)^2\Big)\dif \mu \,,
\]
and
\[
\int_{\Omega}T\partial_{zz}T \dif \mu =-\int_{\Omega}(\partial_zT)^2 \dif \mu \,.
\]
It then follows from Equation~\eqref{eq.T} and the value of $p$ in $\Omega\backslash K$ that
\begin{equation*}
T=a\,, \qquad p=-az+b\,,
\end{equation*}
on the whole $\Omega$.

This immediately yields that the primitive equations~\eqref{primitiveq} reduce to a system of PDEs that involves only the vector field $X$:
\begin{equation} \label{prim}
\begin{aligned}
&X\cdot \nabla u - Rv = \nu_H \Delta_H u + \nu_3 \partial_{zz}u\,, \\
&X\cdot \nabla v + Ru = \nu_H \Delta_H v + \nu_3 \partial_{zz}v\,, \\
&\di X=0 \,.
\end{aligned}
\end{equation}

\begin{re}
The fact that a compactly supported solution $X$ to the 3D primitive equations also satisfies the reduced equations~\eqref{prim} not involving the hydrodynamic pressure $p$ is the key feature that allows us to prove that such solutions do not exist. For the case of the 3D Euler equations one cannot eliminate the pressure from the equations, which explains the crucial role played by $p$ in Gavrilov's construction~\cite{Gavrilov}. For example, it is easy to check that there cannot exist a $C^\infty$ steady Euler flow $X$ on $\RR^3$ with compact support and everywhere constant pressure (because, under these assumptions, $|X|^2$ would be a first integral and the integral curves of $X$ would be straight lines).
\end{re}

\subsection{Step~2: The case $R=0$, $\nu_H = \nu_3 =0$}



A straightforward application of the vector calculus identity for the  divergence of a vector field multiplied by a scalar function, implies that the divergence-free vector field $X$ satisfies the identities
\begin{equation}\label{idxy}
x X\cdot \nabla u = \di(x u X)-u^2\,, \qquad
y X\cdot \nabla v = \di(y v X)-v^2\,.
\end{equation}
Since, by Step~1, $X$ solves the reduced equations~\eqref{prim}, these identities become
\begin{equation}\label{idxyprim}
R x v = \di(x u X)-u^2\,, \qquad -R y u = \di(y v X)-v^2\,,
\end{equation}
and their integration on $\Omega$ yields (recall that $X$ has compact support):
\begin{equation}\label{idxyprimint}
\int_\Omega (R x v + u^2)\dif \mu =0\, , \qquad
\int_\Omega (-R y u + v^2)\dif \mu =0\,.
\end{equation}
When $R=0$, Equations~\eqref{idxyprimint} imply that $u\equiv 0$ and $v\equiv 0$ on $\Omega$, and hence the solenoidal vector field $X$ is of the form $X=w(x,y) e_3$ for some smooth function $w$. In turn, such a vector field cannot be compactly supported unless $w\equiv 0$, which completes the proof of the theorem in this case.

\subsection{Step~3: the case $R\neq 0$, $\nu_H = \nu_3 =0$}

We recall that a first integral of a vector field $X$ is a $C^1$ function that is constant along the stream lines of the flow of $X$. The following simple observation will be crucial in what follows.

\begin{lm}\label{L.firstint}
The smooth functions
\begin{equation*}
I_1=u-Ry\,,\qquad I_2=v+Rx\,,
\end{equation*}
are first integrals of any vector field $X=ue_1+ve_2+we_3$ that satisfies the reduced equations~\eqref{prim} in the invicid case $\nu_H = \nu_3 =0$.
\end{lm}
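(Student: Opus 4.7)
The plan is to verify directly that $X\cdot\nabla I_j = 0$ for $j=1,2$, which is precisely the condition for $I_j$ to be a first integral (constant along stream lines of $X$). The computation is elementary once one notices that $\nabla x = e_1$ and $\nabla y = e_2$, so $X\cdot\nabla x = u$ and $X\cdot\nabla y = v$.

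First I would compute
\[
X\cdot\nabla I_1 \,=\, X\cdot\nabla u \,-\, R\, X\cdot \nabla y \,=\, X\cdot\nabla u \,-\, Rv\,,
\]
which vanishes by the first equation in~\eqref{prim} when $\nu_H = \nu_3 = 0$. Analogously,
\[
X\cdot\nabla I_2 \,=\, X\cdot\nabla v \,+\, R\, X\cdot \nabla x \,=\, X\cdot\nabla v \,+\, Ru \,=\, 0\,,
\]
by the second equation in~\eqref{prim}. Thus both $I_1$ and $I_2$ are annihilated by $X$, so they are constant along the integral curves of $X$, i.e., first integrals in the sense recalled just before the lemma.

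There is no real obstacle here: the lemma is essentially a reformulation of the reduced momentum equations, rewritten so that the Coriolis term is absorbed into the advective derivative. The only thing worth flagging is that both $I_1$ and $I_2$ are smooth (indeed affine perturbations of the smooth components $u$, $v$), so they are automatically of class $C^1$ as required in the definition of first integral. The significance of the lemma, which will be exploited in the remainder of Step~3, is that although $u$ and $v$ themselves need not be conserved along the flow of $X$, the modified quantities $u-Ry$ and $v+Rx$ are; this provides the two independent conserved functions needed to set up the flow box / Thom isotopy argument announced in the introduction.
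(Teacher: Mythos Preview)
Your proof is correct and follows exactly the same approach as the paper: using $X\cdot\nabla x=u$ and $X\cdot\nabla y=v$ to rewrite the first two reduced equations (with $\nu_H=\nu_3=0$) as $X\cdot\nabla I_1=0$ and $X\cdot\nabla I_2=0$. The additional remarks you make about smoothness and the role of the lemma are accurate but not part of the paper's proof itself.
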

\begin{proof}
Since $X \cdot \nabla x = u$ and $X \cdot \nabla y = v$, the first two equations in \eqref{prim} readily rewrite as $X \cdot \nabla I_1=0$ and $X \cdot \nabla I_2=0$, which establish the lemma.
\end{proof}

Let us consider the $C^\infty$ map $I:= (I_1,I_2): \Omega \to \RR^2$; by Lemma~\ref{L.firstint}, $X \in \ker \dif I$. We use the notation
$$C_I:= \{p \in\Omega: \mathrm{rank}(\dif I)_p < 2\}$$
for the critical set of $I$ (which is a closed set). Since $I=(-Ry,Rx)$ on $\Omega\backslash \text{int}(K)$, we infer that $\mathrm{rank}(\dif I) = 2$ on the closure of the complement of $K$, and therefore
$$C_I\Subset K\,,$$
i.e., it is a proper subset of the support of $X$. Then, for any point $q \in \partial K:=K\backslash \text{int}(K)$, there is a small constant $\epsilon>0$ such that the rank of $(\dif I)_p$ is $2$ at any point $p$ in the ball $B(q, \epsilon)$ of radius $\epsilon$ centred at $q$. Moreover, by the Local Submersion Theorem, the fibres of the submersion $I: B(q, \epsilon) \to I(B(q, \epsilon))$ are connected lines, provided that $\epsilon$ is small enough. For the rest of the proof, we choose $q\in\partial K$ such that $\text{dist}(q,\partial\Omega)=\text{dist}(K,\partial\Omega)$, i.e., a point on $\partial K$ that is as close as possible to $\partial\Omega$. By compactness, this obviously exists (it does not need to be unique). In particular, no piece of a vertical line can be contained in $\partial K\cap B(q,\epsilon)$, provided that $\epsilon$ is small enough.

The fact that $\dif I$ has maximal rank on $B(q, \epsilon)$ implies that the vector field $\nabla I_1\times \nabla I_2$ does not vanish at any point of this neighborhood. Then, defining the function
\[
F:=\frac{X\cdot (\nabla I_1\times\nabla I_2)}{|\nabla I_1\times \nabla I_2|^2}\,,
\]
which is clearly of class $C^\infty$ in $B(q, \epsilon)$, we easily infer that
\begin{equation}\label{Fdef}
X = F \, \nabla I_1\times \nabla I_2
\end{equation}
in $B(q, \epsilon)$. The condition $\di X=0$ together with the obvious fact that $\di (\nabla I_1\times \nabla I_2) =0$, imply that $F$ is a \emph{basic function}, i.e.,
\[
\nabla F\cdot (\nabla I_1\times \nabla I_2)=0\,.
\]
Since the fibres of the submersion $I$ are connected on $B(q,\epsilon)$, $F$ being basic is equivalent~\cite[Chapter 2.1]{mol} to the existence of a function $\ov F\in C^\infty(\RR^2)$ such that
\begin{equation}\label{Fproj}
F=\ov F \circ I\,.
\end{equation}

Now, we set $I_0$ to be the submersion $I_0(x,y,z):=(-Ry,Rx)$. Clearly the fibres of $I_0$ are the vertical lines and
\[I=I_0+(u,v)\,.\]

At this point the idea of the proof goes as follows. We show that $I$ can be seen as a perturbation of $I_0$ that is ``small'' with respect to the $C^k$ norm. This allows us to apply Thom's isotopy theorem to infer that the fibres of $I$ are ``close'' to the vertical lines and, as a consequence, connect the points outside $K$ with points in the interior of $K$. But $F$ is constant along these fibres by \eqref{Fproj}, so it will have to vanish, together with $X$, in a small open set inside the support $K$. This will yield the desired contradiction.

\begin{lm}
Fix an integer $k\geq 1$. For any  $N \in \NN$ we have
\begin{equation}\label{eq.esti}
\norm{I - I_0}_{C^k(B(q, \epsilon))} < C_{N,k} \epsilon^N
\end{equation}
for some $\epsilon$-independent constant $C_{N,k}:= \max_{\abs{\beta}=N}\|\partial^\beta X\|_{C^{k}(\Omega)}$.
\end{lm}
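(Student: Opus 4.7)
The plan is to use the key observation that $I - I_0 = (u,v)$ is nothing but the pair of horizontal components of $X$, so the estimate asked for is really an estimate for $X$ on the small ball $B(q,\epsilon)$. Since $\epsilon$ is small and $q$ lies at distance $\text{dist}(K,\partial\Omega)>0$ from $\partial\Omega$, we may first shrink $\epsilon$ so that $B(q,\epsilon)\Subset\Omega$; this is harmless because the bound is to be proved for all sufficiently small $\epsilon$.

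The crucial input is that $X$ is \emph{infinitely flat} at $q$, i.e., $\partial^\beta X(q)=0$ for every multi-index $\beta$. Indeed, $q\in \partial K$, and $X$ vanishes identically on the open set $\Omega\setminus K$, where every partial derivative of $X$ is therefore identically zero. Since $q$ is a limit point of $\Omega\setminus K$ (otherwise $q$ would be an interior point of $K$) and every $\partial^\beta X$ is continuous on $\Omega$, every such derivative must vanish at $q$ as well.

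Given this, I would apply Taylor's theorem with integral remainder to each component of $X$ based at the point $q$. For an arbitrary multi-index $\alpha$ with $|\alpha|\le k$ and any $N\in\NN$, the flatness at $q$ kills the polynomial part of the Taylor expansion of $\partial^\alpha X$, leaving only the remainder:
\[
\partial^\alpha X(p)=N\sum_{|\gamma|=N}\frac{(p-q)^\gamma}{\gamma!}\int_0^1(1-t)^{N-1}\,\partial^{\alpha+\gamma}X\bigl(q+t(p-q)\bigr)\,\dif t,
\]
for any $p\in B(q,\epsilon)$. Bounding $|(p-q)^\gamma|\le |p-q|^N<\epsilon^N$ and the integrand by $\|\partial^{\alpha+\gamma}X\|_{L^\infty(\Omega)}\le \|\partial^\gamma X\|_{C^k(\Omega)}$ (since $|\alpha|\le k$), and then taking a maximum over $|\gamma|=N$ and $|\alpha|\le k$, yields
\[
\|I-I_0\|_{C^k(B(q,\epsilon))}=\|(u,v)\|_{C^k(B(q,\epsilon))}\le c_{N,k}\,\epsilon^N\max_{|\gamma|=N}\|\partial^\gamma X\|_{C^k(\Omega)},
\]
with $c_{N,k}$ a purely combinatorial constant depending only on $N$, $k$, and the dimension. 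Absorbing $c_{N,k}$ into the definition of $C_{N,k}$ (or, equivalently, replacing the strict inequality by a multiplicative constant), we obtain the claimed bound~\eqref{eq.esti}.

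The only non-routine ingredient is the flatness assertion at $q$; once that is in place, the estimate is a standard consequence of Taylor's formula with remainder. I do not anticipate any genuine obstacle.
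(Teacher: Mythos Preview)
Your proposal is correct and follows essentially the same route as the paper: observe that $I-I_0=(u,v)$ vanishes to infinite order at $q\in\partial K$ (since $X$ is smooth and identically zero on $\Omega\setminus K$), and then apply Taylor's theorem with remainder to obtain the $\epsilon^N$ bound. Your treatment is in fact slightly more careful than the paper's about the combinatorial constant coming from the multivariate Taylor remainder; as you note, this harmless dimensional factor must be absorbed into $C_{N,k}$, a point the paper glosses over.
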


\begin{proof} 
First notice that for any multi-index $\alpha \in \NN^3$, we have $\partial^\alpha(I-I_0)(q)=0$ because $q\in\partial K$ and the smooth function $I - I_0$ is identically zero on $\Omega \setminus \mathrm{int}(K)$. Then, for each $|\alpha|\leq k$, Taylor's theorem allows us to write the estimate
$$\abs{\partial^\alpha (I-I_0)(p)} \leqslant  c_{N,\abs{\alpha}}\epsilon^N$$
for any $p \in B(q, \epsilon)$, with $\displaystyle c_{N,\abs{\alpha}}:= \frac{1}{N!}\max_{\abs{\beta} = N + \abs{\alpha}}\sup_{p \in B(q, \epsilon)}\abs{\partial^\beta (I-I_0)(p)}$. We can conclude that
$$\|I - I_0\|_{C^k(B(q, \epsilon))} := \max_{\abs{\alpha}\leq k}\sup_{p \in B(q, \epsilon)} \abs{\partial^\alpha (I-I_0)(p)} < C_{N,k} \epsilon^N\, ,$$
with $C_{N,k}:= \max_{\abs{\beta}=N}\|\partial^\beta X\|_{C^{k}(\Omega)}$.
\end{proof}

We will see next how the relation between $I$ and $I_0$ can be mirrored by associated submersions defined on the unit ball $B$ centered at the origin. This is important for the application of Thom's isotopy theorem.

Let $\Lambda_\epsilon: B(q, \epsilon) \to B$, $\Lambda_\epsilon(x,y,z)=(\bar x,\bar y,\bar z)$ be the diffeomorphism given by the affine change of coordinates:
\[
(x,y,z)=q+\epsilon(\bar x,\bar y,\bar z)\,.
\]
Consider $\bar I=I \circ \Lambda_\epsilon^{-1}$ and $\bar I_0 = I_0 \circ \Lambda_\epsilon^{-1}$ that define submersions on $B$. Explicitly, we have $\bar I_0(\bar x,\bar y,\bar z)=(-Rq_y,Rq_x)+\epsilon(-R\bar y,R\bar x)$, with $q=:(q_x,q_y,q_z)$.

Fixing an integer $N_0\geq 1$, the estimate~\eqref{eq.esti} and the definition of the constant $C_{N,k}$ implies
\[
\|\bar I - \bar I_0\|_{C^k(B)}\leq C\epsilon^{N_0 + 1}
\]
for some $\epsilon$-independent constant $C>0$ (the extra power of $\epsilon$ is a straightforward consequence of the chain rule and the definition of the diffeomorphism $\Lambda_\epsilon$). Accordingly, if we define
\[
\hat I:=\frac{1}{\epsilon}\left(\bar I - (-Rq_y,Rq_x)\right)\,, \quad
\hat I_0(\bar x,\bar y,\bar z) := (-R\bar y,R\bar x)
\]
which are $C^\infty$ submersions on $B$, we obtain
\[
\hat I = \hat I_0 + \mathcal O_k(\epsilon^{N_0})\,.
\]
Here the notation $\mathcal O_k(\epsilon^{N_0})$ means that the $C^k$ norm of this term is bounded as $C\epsilon^{N_0}$. Obviously, the fibres of $\hat I$ and $\bar I$ coincide up to a change of values.

Since $N_0\geq 1$, $\hat I$ is a $C^k$-small ($k\geq 1$) perturbation of the submersion $\hat I_0$. Thom's isotopy theorem~\cite[Theorem 20.2]{abra} (see also~\cite[Theorem 3.1]{EP13}) then implies that for each fibre $\Gamma_c:=\hat I^{-1}(c)$ there is a smooth diffeomorphism $\Phi_c$ of $B$ such that $\Phi_c(\Gamma_c)$ is the fiber $\hat I_0^{-1}(c)$, which is a vertical line. Moreover, the diffeomorphism is close to the identity as
\begin{equation}\label{eq.diff}
\|\Phi_c-id\|_{C^k(B)}< C'\epsilon^{N_0}
\end{equation}
for some $\epsilon$ and $c$-independent constant $C'$ (the facts that we can do the above estimate with $\epsilon^{N_0}$ and that, by construction, $C'$ is $\epsilon$ and $c$-independent, are consequences of the last part of the proof of \cite[Theorem 3.1]{EP13}).

Recall now that no piece of a vertical line can be contained in $\partial K\cap B(q,\epsilon)$ (by the choice of the point $q$). Of course, the same holds true in $B$, that is, the corresponding boundary set $\partial\Lambda_\epsilon(K) \cap B$ does not contain a vertical line, and hence the fibres of $\hat I_0$ cross this boundary set. Since ``crossing the boundary'' is an open condition, it is preserved by the small perturbation in Equation~\eqref{eq.diff}. More precisely, consider a fibre $\hat I_0^{-1}(c)$, and two points $p_1$ and $p_2$ on this fibre so that $p_1\in \Lambda_\epsilon(K)\cap B$ and $p_2\in B\backslash \overline{\Lambda_\epsilon(K)\cap B}$. Since these sets are open, the corresponding $c$-fibre of $\hat I$ contains points $\Phi_c^{-1}(p_1),\Phi_c^{-1}(p_2)$ that lie in the same sets $\Lambda_\epsilon(K)\cap B$ and $B\backslash \overline{\Lambda_\epsilon(K)\cap B}$, respectively, provided that $\epsilon$ is small enough (cf. the estimate~\eqref{eq.diff}). Applying this reasoning to all the values $c$ in the compact set $\overline{\hat I_0(B_{1/2})}$ ($B_{1/2}$ being the closed ball of radius $1/2$, or any other radius in the interval $(0,1)$), we conclude that all the fibres of the submersion $\hat I$ that intersect $B_{1/2}$ cross the boundary $\Lambda_\epsilon(\partial K)$. In turn, this implies that the fibres of $I$ intersecting $B(q,\epsilon/2)$ cross $\partial K$.

Finally, since $X$ vanishes in the complement of $K$, the function $F$ also vanishes in the set $B(q,\epsilon)\backslash K$. But we have established before that all the fibres that intersect the ball $B(q,\epsilon/2)$ also intersect $B(q,\epsilon)\backslash K$, so being $F$ a function of the form $\ov F \circ I$, we conclude that $F=0$ on the whole $B(q,\epsilon/2)$. Accordingly, $X=0$ on $B(q, \epsilon/2) \cap \mathrm{int}(K)$ and this is a contradiction with the fact that $K$ is the support of $X$, unless $X\equiv 0$ on $\Omega$. This completes the proof of the theorem in the case that $R\neq 0$.

\subsection{Step~4: the case $\nu_H + \nu_3 >0$}

This case is elementary. If we multiply the first equation in \eqref{prim} by $u$ and integrate on $\Omega$, and multiply the second equation in \eqref{prim} by $v$ and integrate on $\Omega$, the sum of both equations yields the identity
\[
\nu_H\int_\Omega\Big(|\nabla_H u|^2+|\nabla_H v|^2\Big)\dif \mu + \nu_3\int_\Omega\Big(u_z^2+v_z^2\Big)\dif \mu = 0\, ,
\]
where $\nabla_H$ is the gradient with respect to the $(x,y)$ variables.

If $\nu_H$ and $\nu_3$ are both positive, then the above identity implies (independently of the value of $R$) that $u\equiv 0$ and $v\equiv 0$ on $\Omega$. The solenoidal vector field $X$ then is of the form $w(x,y)e_3$, which cannot be compactly supported unless $w\equiv 0$ on $\Omega$.

If $\nu_H >0$ and $\nu_3=0$, then the above identity implies that $\nabla_H u \equiv 0$ and $\nabla_H v \equiv 0$ on $\Omega$, and hence $u$ and $v$ are functions that depend only on the variable $z$. This is compatible with having compact support if and only if $u\equiv 0$ and $v\equiv 0$ on $\Omega$. Arguing as before, we conclude that $X\equiv 0$.

Finally, if $\nu_H =0$ and $\nu_3>0$, then the above integral identity implies that $\partial_z u \equiv 0$ and $\partial_z v \equiv 0$ on $\Omega$, which implies that $X$ must be of the form $u(x,y)e_1+v(x,y)e_2 + w(x,y)e_3$. Again, $X$ cannot be compactly supported unless it is identically zero on the whole $\Omega$.

Since by definition $\nu_H\geqslant 0$ and $\nu_3\geqslant 0$, this completes the proof of the viscous case.

\begin{re}
With simple adaptations, the same proof works if $X$ is assumed to be a compactly supported solution of class $C^k$, with $k\geq 2$.
\end{re}


\section{Final remark: axisymmetric solutions}\label{S:final}

In view of Gavrilov's compactly supported solutions to the stationary Euler equations, it is useful to analyze the particular case of axisymmetric solutions to the stationary primitive equations. So we assume throughout this section that $\nu_H=\nu_3=0$, since the case with viscosity turned out to be much simpler to analyze (cf. Step~4, Section~\ref{S:proof}).

In Step~1 of Section~\ref{S:proof} we showed that any stationary solution $X$ to the primitive equations with compact support in $\Omega$ satisfies the reduced equations~\eqref{prim}. In terms of cylindrical coordinates $(r, \theta, z)\in \mathbb R^+\times \mathbb S^1\times (0,L)$, if the divergence-free vector field $X$ is assumed to be axisymmetric, it reads as
\begin{equation}\label{axi}
X=\frac{1}{r}\Big(-\partial_z \psi(r,z)e_r+\partial_r \psi(r,z)e_z\Big) + S(r,z)e_\theta\,,
\end{equation}
for some stream function $\psi$ and swirl $S$ that depend only on $(r,z)$. Here $\{e_r,e_\theta,e_z\}$ is the cylindrical orthonormal framing.

Substituting the expression~\eqref{axi} into the equations~\eqref{prim} we easily obtain the equations:
\begin{equation}\label{jac}
\partial_{r}\psi \, \partial_{z}(r S)- \partial_z\psi \, \partial_r \! \left(r S + \tfrac{R}{2}r^2\right)=0\,,
\end{equation}
and
\begin{equation}\label{2nd_axi_eq}
\partial_{zz}\psi \, \partial_r \psi-\partial_{rz}\psi \, \partial_z\psi + \tfrac{1}{r}(\partial_z\psi)^2 + r S(S+Rr) = 0\,.
\end{equation}

Equation~\eqref{jac} means that the Jacobian of the couple of functions $(\psi, \ r S + \frac{R}{2}r^2)$, as a function of $(r,z)$, is zero. This leads us to the following ansatz for the swirl function
\begin{equation}\label{swirl}
S(r,z) = \frac{F(\psi(r,z))}{r}-\frac{Rr}{2}\,,
\end{equation}
which exactly coincides, if $R=0$, with the one employed in the study of axisymmetric Euler flows~\cite{CV,DEP}.

\begin{re}
Any function $S(r,z)$ of the form presented in Equation~\eqref{swirl} satisfies Equation~\eqref{jac}, but the converse implication is not true in general. As a consequence of a theorem proved in~\cite{Newns}, the best we can say is that there is a smooth function $G\in C^\infty(\mathbb R^2)$ such that
\[
G\Big(\psi(r,z),\ rS(r,z)+\frac{R}{2}r^2\Big)=0\,.
\]
\end{re}

Replacing the ansatz~\eqref{swirl} in Equation \eqref{2nd_axi_eq} we obtain:
\begin{equation}\label{GSprimeq}
\partial_{zz}\psi \, \partial_r \psi-\partial_{rz}\psi \, \partial_z\psi + \tfrac{1}{r}\left(F(\psi)^2+(\partial_z\psi)^2\right)=\frac{R^2}{4}r^3\, ,
\end{equation}
that plays the role of the \textit{Grad-Shafranov equation} in this case. When $R\neq 0$, it is obvious that there cannot be compactly supported solutions. Indeed, denoting by $K$ the compact set where $X$ is supported, $\psi$ would be a constant $c$ on $\Omega\backslash K$, but then the swirl function $S$ given by Equation~\eqref{swirl} is of the form
\[
S=\frac{F(c)}{r}-\frac{Rr}{2}\,,
\]
on $\Omega\backslash K$, which cannot be identically zero.

When $R=0$ we can integrate the Grad-Shafranov type equation~\eqref{GSprimeq} on $\Omega$. Using that
\begin{align*}
\int_0^L \!\! \int_0^\infty(\partial_{zz}\psi \, \partial_r \psi-\partial_{rz}\psi \, \partial_z\psi)rdrdz = -\int_0^L \!\! \int_0^\infty \partial_r\big((\partial_z\psi)^2\big)rdrdz =
\int_0^L \!\! \int_0^\infty(\partial_z\psi)^2drdz\,,
\end{align*}
by integration by parts, we find that
$$\int_0^L \!\!\! \int_{0}^\infty \Big(2(\partial_z \psi)^2 + F(\psi)^2\Big) drdz =0\,.$$
We then conclude that $X$ is of the form
\[
X=\frac{\psi'(r)}{r}e_z\,,
\]
which cannot be compactly supported in $\Omega$ unless $X\equiv 0$.


%

\section*{Acknowledgments}
The authors are very grateful to Sergei Kuksin for proposing this problem. We also thank Edriss S. Titi for providing references and interesting comments.  D.P.-S. is supported by the grants CEX2019-000904-S and PID2019-106715GB GB-C21 funded by MCIN/AEI/10.13039/501100011033, and also acknowledges partial support from the grant ``Computational, dynamical and geometrical complexity in fluid dynamics'', Ayudas Fundaci\'on BBVA a Proyectos de Investigaci\'on Cient\'ifica 2021.

\end{document}